\documentclass[12]{amsart}
\usepackage{amsmath,amssymb,amsthm,color,enumerate,comment,centernot,enumitem,url,cite}
\usepackage{graphicx,relsize,bm}
\usepackage{mathtools}
\usepackage{array}

\makeatletter
\newcommand{\tpmod}[1]{{\@displayfalse\pmod{#1}}}
\makeatother

\newtheorem{thm}{Theorem}[section]

\theoremstyle{remark}

\theoremstyle{definition}

\newtheorem{rem}[thm]{Remark}

\theoremstyle{THM}

\newcommand{\FF}{{\mathcal F}}

\newcommand{\Z}{{\mathbb Z}}
\newcommand{\Q}{{\mathbb Q}}

\newcommand{\F}{{\mathbb F}}

\makeatletter
\@namedef{subjclassname@2020}{%
  \textup{2020} Mathematics Subject Classification}
\makeatother

\def\red#1 {\textcolor{red}{#1 }}
\def\blue#1 {\textcolor{blue}{#1 }}

\numberwithin{equation}{section}

\begin{document}

\title[Wieferich Primes and Monogenic Trinomials]{Wieferich Primes and Monogenic Trinomials}

%\author{Joshua Harrington}
%\address{Department of Mathematics, Cedar Crest College, Allentown, Pennsylvania, USA}
%\email[Joshua Harrington]{Joshua.Harrington@cedarcrest.edu}

\author{Lenny Jones}
\address{Professor Emeritus, Department of Mathematics, Shippensburg University, Shippensburg, Pennsylvania 17257, USA}
\email[Lenny~Jones]{doctorlennyjones@gmail.com}

%\author{Daniel White}
%\address{Department of Mathematics, Bryn Mawr College, Bryn Mawr, Pennsylvania 19010-2899, USA}
%\email[Daniel~White]{dfwhite@brynmawr.edu}
\date{\today}

\begin{abstract}
A prime $p$ is called a {\em Wieferich prime} if $2^{p-1}\equiv 1 \pmod{p^2}$.   
     A monic polynomial $f(x)\in \Z[x]$ of degree $N\ge 2$ is called {\em monogenic} if $f(x)$ is irreducible over ${\mathbb Q}$ and 
    $\{1,\theta,\theta^2,\ldots,\theta^{N-1}\}$
    is a basis for the ring of integers of ${\mathbb Q}(\theta)$, where $f(\theta)=0$.  In this article, we show that $\FF_p(x):=x^{2p}+2x^{p}+2$ is monogenic if and only if $p$ is not a Wieferich prime. 
\end{abstract}

\subjclass[2020]{Primary  11A41, 11R04; Secondary 11A07, 11R09}
\keywords{Wieferich prime, monogenic, trinomial}

\maketitle
\section{Introduction}\label{Section:Intro}
We say that a prime $p$ is a {\em Wieferich prime} if $2^{p-1}\equiv 1 \pmod{p^2}$. Currently, the only known Wieferich primes are $p=1093$ and $p=3511$. 

We say that a monic polynomial $f(x)\in \Z[x]$ is {\em monogenic} if $f(x)$ is irreducible over $\Q$ and  $\{1,\theta,\theta^2,\ldots,\theta^{\deg(f)-1}\}$
  is a basis for the ring of integers, $\Z_K$, of $K={\mathbb Q}(\theta)$, where $f(\theta)=0$. 
  It is well known \cite{Cohen} that 
  \begin{equation} \label{Eq:Dis-Dis}
\Delta(f)=\left[\Z_K:\Z[\theta]\right]^2\Delta(K),
\end{equation} where $\Delta(f)$ and $\Delta(K)$ denote the discriminants over $\Q$, respectively, of $f(x)$ and the number field $K$. We refer to $\left[\Z_K:\Z[\theta]\right]$ as the {\em index} of $f(x)$, and we denote it index($f$). 
Thus, from \eqref{Eq:Dis-Dis}, 
\begin{equation}\label{Mono}
f(x) \ \mbox{is monogenic if and only if} \ \Delta(f)=\Delta(K),\  \mbox{or equivalently,} \  \Z_K=\Z[\theta].
\end{equation}

In this article, we prove the following theorem: 
\begin{thm}\label{Thm:Main}
Let $p\ge 3$ be a prime, and let   
\[\FF_{p}(x):=x^{2p}+2x^{p}+2.\] Then $\FF_{p}(x)$ is monogenic if and only if $p$ is not a Wieferich prime.
\end{thm}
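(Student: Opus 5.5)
The plan is to compute $\Delta(\FF_p)$, show that only the primes $2$ and $p$ can divide the index, dispose of $2$ by an Eisenstein argument, and treat $p$ with Dedekind's criterion, where the Wieferich condition should appear.

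\emph{Irreducibility and the discriminant.} $\FF_p(x)$ is $2$-Eisenstein, so it is irreducible over $\Q$. Write $g(x)=x^2+2x+2$, so that $\FF_p(x)=g(x^p)$. Using the trinomial discriminant formula, or the formula for the discriminant of a composition $g(x^p)$, I expect $\Delta(\FF_p)=\pm 2^{a}p^{2p}$ for some exponent $a$; the key input is $\Delta(g)=-4$. Hence by \eqref{Mono} only $2$ and $p$ can divide $\mathrm{index}(\FF_p)$. Since $\FF_p$ is $2$-Eisenstein, $2$ is totally ramified and $2\nmid \mathrm{index}(\FF_p)$. So everything reduces to deciding when $p\mid \mathrm{index}(\FF_p)$.

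\emph{Dedekind's criterion at $p$.} Modulo $p$ we have $\FF_p(x)\equiv g(x)^p$. The roots of $g$ are $\theta=-1+i$ and $\bar\theta=-1-i$, and $g$ is separable mod $p$ because $p$ is odd. Set
\[h(x)=\frac{\FF_p(x)-g(x)^p}{p}\in\Z[x].\]
Dedekind's criterion says $p\nmid \mathrm{index}(\FF_p)$ if and only if $\overline{h}$ has no common root with $\overline{g}$ in $\overline{\F_p}$. Evaluating at the exact root $\theta$ gives $p\,h(\theta)=\FF_p(\theta)=g(\theta^p)$, and similarly at $\bar\theta$. So the condition becomes: for every prime $\mathfrak P$ of $\Z[i]$ above $p$, $g(\theta^p)\not\equiv 0 \pmod{\mathfrak P^{2}}$. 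Here I use that $p$ is unramified in $\Z[i]$, so that $v_{\mathfrak P}(p)=1$, and I apply the same test with $\bar\theta$ (its conjugate) to cover both roots.

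\emph{The Wieferich computation.} Factor $g(\theta^p)=(\theta^p-\theta)(\theta^p-\bar\theta)$. The useful identities are
\[\theta^2=-2i,\qquad \theta^4=-4,\qquad \theta=-i\,\bar\theta.\]
Modulo a given $\mathfrak P$, Frobenius sends $\theta$ to $\theta^p$. So $\theta^p$ is congruent to exactly one of $\theta,\bar\theta$, the other factor is a $\mathfrak P$-unit, and only one factor matters. For that factor I write $\theta^p-\theta=\theta(\theta^{p-1}-1)$, or $\theta^p-\bar\theta=\bar\theta\,(\cdots)$, and raise to a suitable power to compare with $\theta^{4(p-1)}=4^{p-1}=2^{2(p-1)}$. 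The point is that $u\equiv 1\pmod{\mathfrak P}$ together with $u^4\equiv1\pmod{\mathfrak P^2}$ forces $u\equiv1\pmod{\mathfrak P^2}$, since $4$ is prime to $p$. This should give
\[v_{\mathfrak P}\bigl(g(\theta^p)\bigr)\ge 2 \iff 2^{2(p-1)}\equiv1 \pmod{p^2} \iff 2^{p-1}\equiv 1\pmod{p^2},\]
where the last step holds because $2^{p-1}+1\equiv 2\pmod p$ is a unit.

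\emph{Main obstacle.} The delicate step is the bookkeeping in the last paragraph. It splits into cases $p\equiv1\pmod 4$ (split) and $p\equiv 3\pmod 4$ (inert). For each case I must determine which of $\theta,\bar\theta$ is congruent to $\theta^p$ modulo $\mathfrak P$, and track the root-of-unity factors $(-i)^{(p-1)/2}$ and $(-i)^p$. I would handle these by raising to the fourth power, which kills them, and then check carefully that this is legitimate modulo $\mathfrak P^2$. Combining with \eqref{Mono} then yields the theorem.
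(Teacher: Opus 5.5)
Your proposal is correct. It differs from the paper mainly in how the decisive congruence modulo $p^2$ is evaluated.

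The paper invokes the Jakhar--Khanduja--Sangwan trinomial criterion, reducing to coprimality of $H_1(x)=x^2+2x+2$ and $H_2(x)=(2x^p+2-(2x+2)^p)/p$ in $\F_p[x]$. Since $pH_2(\theta)=\FF_p(\theta)$ at an exact root $\theta$ of $H_1$, this is the same test as your Dedekind test with $h=(\FF_p-g^p)/p$. Your use of the unfactored lift $g$ is harmless, because any two monic lifts of $\bar g$ have $p$-th powers congruent modulo $p^2$.

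The paper then splits into cases. For $p\equiv 3\pmod 4$ it computes $\alpha^p=(-1)^{(p-3)/4}2^{(p-1)/2}(1+i)$ explicitly, and needs the quadratic character of $2$ to fix the sign of $2^{(p-1)/2}$ modulo $p^2$ in the converse. For $p\equiv 1\pmod 4$ it works with integer representatives, squares to reach $(2^p-2)(2^p+2)\equiv 0\pmod{p^2}$, reverses through a factorization, and adds a separate argument that the two roots behave alike.

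Your Frobenius-plus-fourth-power argument treats both residue classes, both directions and both primes above $p$ at once. The bookkeeping you flag as the main obstacle actually disappears. With $\sigma=\mathrm{Frob}_{\mathfrak P}$, the element $u=\theta^p/\sigma(\theta)$ is a $\mathfrak P$-unit with $u\equiv 1\pmod{\mathfrak P}$. Since $\sigma(\theta)^4=-4$ whether $\sigma(\theta)$ is $\theta$ or $\bar\theta$, you have $u^4=(-4)^{p-1}=2^{2(p-1)}$. Because $u^3+u^2+u+1\equiv 4\pmod{\mathfrak P}$, you get $v_{\mathfrak P}(g(\theta^p))=v_{\mathfrak P}(u-1)=v_p(2^{2(p-1)}-1)$ without tracking any roots of unity.

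The paper's route needs only elementary congruences beyond the cited criterion. Yours uses a little arithmetic in $\Z[i]$, but it is shorter and explains why the answer is independent of $p \bmod 4$.
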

\noindent 
While analogous theorems exist for $k$-Wall-Sun-Sun primes \cite{Jones k-Wall-Sun-Sun} (aka Fibonacci-Wieferich primes when $k=1$) and generalized Wall-Sun-Sun primes \cite{Jones Gen Wall-Sun-Sun, HJBAMS Gen Wall-Sun-Sun}, 
these previous results do not pertain to the situation in Theorem \ref{Thm:Main}.

\begin{rem}
We point out that Theorem \ref{Thm:Main} is also valid when $p=2$ since 2 is not a Wieferich prime and it is easy to verify that $x^4+2x^2+2$ is monogenic.
\end{rem}

\section{Preliminaries}\label{Section:Prelim}
Throughout this article, we assume that $p$ is an odd prime and that 
\begin{equation}\label{F}
\FF_{p}(x):=x^{2p}+2x^{p}+2.
\end{equation} Note that $\FF_p(x)$ is irreducible over $\Q$ since it is $2$-Eisenstein. % with respect to the prime 2. 
From a theorem due to Swan \cite{Swan}, we have 
\begin{equation}\label{Dis} 
\Delta(\FF_{p})=-2^{3p-1}p^{2p}. 
\end{equation} 
Thus, we see from \eqref{Eq:Dis-Dis} and  \eqref{Mono} that 
\begin{equation}\label{Mono condition}
\FF_p(x) \ \mbox{is monogenic if and only if} \ \gcd(2p,{\rm index}(\FF_p))=1.
\end{equation} One method that can be used, for any monic irreducible polynomial, to determine explicit conditions (more so than \eqref{Mono condition}) on the prime divisors of the polynomial discriminant to guarantee monogenicity is known as Dedekind's Index Criterion \cite{Cohen}. However, another more-algorithmic method exists that is derived from Dedekind's theorem and designed explicitly for trinomials. This streamlined version of Dedekind's theorem is due to Jakhar, Khanduja and Sangwan \cite[Theorem 1.1]{JKS2}, and requires that only one out of a list of five conditions be checked for each prime divisor $p$ of the discriminant of the trinomial to determine whether $p$ divides the index of the trinomial. Moreover, a careful analysis of \cite[Theorem 1.1]{JKS2} reveals that we only need to apply the fourth condition of that theorem in our situation, which yields the following very succinct result.   
\begin{thm}\label{Thm:JKS} 
%Let $\Z_K$ denote the ring of integers of $K=\Q(\theta)$, where $\FF_p(\theta)=0$, so that index($\FF_p$)=\left[\Z_K:\Z[\theta]\right]$. and let.  . 
The trinomial $\FF_p(x)$ is monogenic if and only if the polynomials 
   \begin{equation*}
     H_1(x):=x^{2}+2x+2 \quad \mbox{ and }\quad H_2(x):=\dfrac{2x^{p}+2-\left(2x+2\right)^{p}}{p} 
   \end{equation*}
   are coprime in $\F_p[x]$. 
\end{thm}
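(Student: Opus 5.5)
We derive the statement from \eqref{Mono condition} by treating the primes $2$ and $p$ separately. For each we use Dedekind's Index Criterion \cite{Cohen}, or equivalently the case analysis in \cite[Theorem 1.1]{JKS2} for the trinomial $x^{2p}+2x^p+2$ with $n=2p$, $m=p$, $a=2$, $b=2$.

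\textbf{Step 1: the prime $2$.} Since $\FF_p(x)$ is $2$-Eisenstein, $2\nmid {\rm index}(\FF_p)$. Indeed, $\FF_p(x)\equiv x^{2p}\pmod 2$, and the Dedekind remainder is $(\FF_p(x)-x^{2p})/2=x^p+1$, which is not divisible by $x$ modulo $2$. Equivalently, in \cite[Theorem 1.1]{JKS2} the condition for a prime dividing both $a$ and $b$ is $4\nmid b$, and here $b=2$. So only the prime $p$ matters.

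\textbf{Step 2: reduction modulo $p$.} Modulo $p$ the Frobenius gives
\[\FF_p(x)\equiv (x^2+2x+2)^p=H_1(x)^p \pmod p.\]
Here $p\nmid 2$ and $p$ divides both exponents $n=2p$ and $m=p$, which is exactly the situation of the fourth condition of \cite[Theorem 1.1]{JKS2}. Alternatively we apply Dedekind directly. Let $\overline{H_1}=\prod g_i^{e_i}$ be the factorization into monic irreducibles in $\F_p[x]$; this is $(x+1)^2$ if $p=2$, and otherwise one or two distinct factors since $\mathrm{disc}(H_1)=-4\ne0$. Then $\overline{\FF_p}=\prod g_i^{pe_i}$ with every exponent at least $p\ge2$. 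Dedekind's criterion says $p\nmid{\rm index}(\FF_p)$ iff $\gcd(\overline{G},\overline{H_1},\overline{F_1})=1$ in $\F_p[x]$, where
\[G(x)=H_1(x), \qquad F_1(x)=\frac{\FF_p(x)-H_1(x)^p}{p}.\]
Since every irreducible factor of $\overline{\FF_p}$ divides $\overline{H_1}$, the condition reduces to $\gcd(\overline{H_1},\overline{F_1})=1$.

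\textbf{Step 3: identify $F_1$ with $H_2$ modulo $H_1$.} This is the only computational point. Write $x^{2p}+2x^p+2=(x^p+1)^2+1$ and $H_1(x)^p=\bigl((x+1)^2+1\bigr)^p$. Reducing modulo $H_1$, where $(x+1)^2\equiv -1$, is awkward integrally, so instead I compare $pF_1$ with $pH_2$ using the binomial theorem. Over $\Z$,
\[(2x+2)^p=2^p(x+1)^p.\]
The claim to check is that $pF_1\equiv u\cdot pH_2 \pmod{(p^2,\,pH_1)}$ for some unit $u$, possibly up to adding a multiple of $H_1$. A cleaner route is to work in $\Z[x]/(H_1)$ or in the ring $\Z_p[\alpha]$ with $H_1(\alpha)=0$. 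There $\FF_p(\alpha)=(\alpha^p+1)^2+1$, and since $(\alpha+1)^2=-1$ we have $\alpha^2=-2\alpha-2$. One evaluates $\FF_p(\alpha)/p \bmod p$ and compares it with $H_2(\alpha)$, using that $\alpha^p\equiv$ a conjugate of $\alpha$ modulo $p$.

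If a direct match proves elusive, the fallback is to cite \cite[Theorem 1.1(iv)]{JKS2} verbatim. Its condition (iv), for $p\mid m$ and $p\mid n$, states that $p\nmid{\rm index}$ iff $x^{n/p}+a'x^{m/p}+b'$ and $\bigl(ax^m+b-(a'x+b')^p\bigr)/p$ are coprime modulo $p$ in the appropriate form. With $a'\equiv a$, $b'\equiv b$ modulo $p$ this produces exactly $H_1$ and $H_2$: the second polynomial, after the substitution $x^p\mapsto x$ (legitimate modulo $p$), becomes $\bigl(2x^p+2-(2x+2)^p\bigr)/p$.

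The main obstacle is matching the exact normalization in \cite{JKS2}, namely the variable substitution and the form $(a x+b)^p$ versus $(2x+2)^p$. I would handle it by carefully transcribing condition (iv) with $n=2p$, $m=p$, $a=b=2$, and checking that $\gcd$ is unaffected by the substitution $x\mapsto x^p$ in $\F_p[x]$, since Frobenius is injective. Combining Steps 1 and 2 with \eqref{Mono condition} then gives the theorem.
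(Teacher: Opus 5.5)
Your Steps 1 and 2 are correct. The prime $2$ is disposed of by the Eisenstein property. Since $\overline{H_1}$ is squarefree for odd $p$ (its discriminant is $-4$), you may take $g=H_1$, $h=H_1^{p-1}$ in Dedekind's criterion, which is insensitive to the choice of lifts. This reduces $p\nmid{\rm index}(\FF_p)$ to $\gcd(\overline{H_1},\overline{F_1})=1$ with $F_1=(\FF_p-H_1^p)/p$. (A small slip: for $p=2$ one has $\overline{H_1}=x^2$, not $(x+1)^2$, but $p$ is odd here.)

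The gap is Step 3. You identify it as the only real point but never carry it out: you describe what one would evaluate, call the integral reduction awkward, and retreat to a citation. So as written your self-contained route does not yet prove the statement. The missing step is a short computation in $\Z[x]$ modulo the monic polynomial $H_1$; no information about $\alpha^p$ modulo $p$ is needed. Since $x^2=H_1-(2x+2)$ and $p$ is odd,
\[
x^{2p}=\bigl(H_1-(2x+2)\bigr)^p=-(2x+2)^p+H_1^p+\sum_{j=1}^{p-1}\binom{p}{j}H_1^{j}\bigl(-(2x+2)\bigr)^{p-j},
\]
hence
\[
F_1-H_2=\frac{x^{2p}+(2x+2)^p-H_1^p}{p}=H_1\sum_{j=1}^{p-1}\frac{1}{p}\binom{p}{j}H_1^{j-1}\bigl(-(2x+2)\bigr)^{p-j}\in H_1\Z[x].
\]
Thus $\overline{F_1}\equiv\overline{H_2}\pmod{\overline{H_1}}$, so $\gcd(\overline{H_1},\overline{F_1})=\gcd(\overline{H_1},\overline{H_2})$, and the theorem follows from \eqref{Mono condition}. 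Equivalently, with $\alpha=-1+i$ one has $\alpha^2=-2i$, so $\FF_p(\alpha)=-2^pi^p+2\alpha^p+2=pH_2(\alpha)$ exactly in $\Z[i]$. This is the same identity the paper later exploits in \eqref{pH2=F}.

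Your fallback is exactly the paper's proof, which simply cites \cite[Theorem 1.1]{JKS2}. There $2$ is harmless (Eisenstein, or condition (i) since $4\nmid 2$). Condition (iv), with $n=2p$, $m=p$ (so $k=1$, $s'=2$, $s=1$), yields the pair $x^2+2x+2$ and $\bigl(2x^p+2+(-2x-2)^p\bigr)/p$, which equals $H_2$ because $p$ is odd. No substitution $x^p\mapsto x$ enters, and such a substitution inside an integer polynomial before dividing by $p$ would not be legitimate anyway, so that part of your transcription should be deleted.

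Once Step 3 is filled in, your route is worth keeping. It verifies the relevant case of condition (iv) directly from Dedekind's criterion, making the result self-contained, whereas the paper rests on a ``careful analysis'' of the five-case theorem of Jakhar, Khanduja and Sangwan that it does not spell out.
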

\noindent Since 
\begin{align}\label{H2}
\begin{split}
  H_2(x)&=\dfrac{2x^{p}+2-2^p\left(x+1\right)^{p}}{p}\\
  &=\left(\frac{2-2^p}{p}\right)x^p+\frac{2-2^p}{p}-2^p\sum_{j=1}^{p-1}\frac{\binom{p}{j}}{p}x^j, 
  \end{split}
\end{align}
we see indeed that $H_2(x)\in \Z[x]$, by Fermat's Little Theorem and the fact that $\binom{p}{j}\equiv 0 \pmod{p}$ for all $j$ with $1\le j\le p-1$,

\section{The Proof of Theorem \ref{Thm:Main}}
\begin{proof} From Theorem \ref{Thm:JKS}, we must determine exactly when $H_1(x)$ and $H_2(x)$  
   are coprime in $\F_p[x]$. Observe from \eqref{H2} that  
    \[\deg(H_2)\ge p-1\ge 2=\deg(H_1).\] 
     in $\F_p[x]$. Thus, our general strategy is to determine  the conditions under which $pH_2(\theta)\equiv 0 \pmod{p^2}$ for the zeros $\theta$ of $H_1(x)$, which will tell us precisely when $H_1(x)$ and $H_2(x)$ are not coprime in $\F_p[x]$, and consequently, when $\FF_p(x)$ is not monogenic, by Theorem \ref{Thm:JKS}. We split the proof into the two cases $p\equiv \pm 1\pmod{4}$.
   
 Suppose first that $p\equiv 3 \pmod{4}$. 
   Then $H_1(x)$ is irreducible in $\F_p[x]$ since $-1$ is not a square in $\F_p$. Hence, if $H_1(x)$ and $H_2(x)$ are not coprime in $\F_p[x]$, it must be that $H_1(x)$ is an irreducible factor of $H_2(x)$, so that every zero of $H_1(x)$ is a zero of $H_2(x)$. Therefore, it is enough to examine $pH_2(\theta)$ modulo $p^2$ for any zero $\theta$ of $H_1(x)$.  Observe that $\alpha:=-1+i$ is a zero of $H_1(x)$, where $i\not\in \F_p$ with $i^2=-1$. Thus, to establish the theorem in this case, we show that 
      \begin{equation}\label{Main Thm for p=3 mod 4}
     pH_2(\alpha)\equiv 0 \pmod{p^2} \quad \mbox{if and only if} \quad 2^{p-1}\equiv 1 \pmod{p^2}.
   \end{equation} 
     Since $(\alpha+1)^p=i^p=-i$, we have from the definition of $H_2(x)$, that 
   \begin{equation}\label{pH2(alpha)1 p mod 4=3}
   pH_2(\alpha)=2\alpha^p+2-2^p(\alpha+1)^p=2\alpha^p+2+2^pi.
   \end{equation} 
    A straightforward induction argument shows that 
   \[\alpha^j=(-1)^{\frac{j-3}{4}}2^{\frac{j-1}{2}}(1+i) \quad \mbox{if $j\equiv 3 \pmod{4}$}.\] Thus, 
   \[\alpha^p=(-1)^{\frac{p-3}{4}}2^{\frac{p-1}{2}}(1+i),\] which yields the following rearrangement 
   of $pH_2(\alpha)$ from \eqref{pH2(alpha)1 p mod 4=3}: 
   \begin{align}\label{pH2(alpha)2 p mod 4=3}
   \begin{split}
   pH_2(\alpha)&=2\left((-1)^{\frac{p-3}{4}}2^{\frac{p-1}{2}}(1+i)\right)+2+2^pi\\
   &=2+(-1)^{\frac{p-3}{4}}2^{\frac{p+1}{2}}+\left(2^p+(-1)^{\frac{p-3}{4}}2^{\frac{p+1}{2}}\right)i.
   \end{split}
   \end{align}
   Consequently, from \eqref{pH2(alpha)2 p mod 4=3}, it follows that 
   $pH_2(\alpha)\equiv 0 \pmod{p^2}$ if and only if 
    \begin{equation}\label{H2(alpha)}
    2+(-1)^{\frac{p-3}{4}}2^{\frac{p+1}{2}} \equiv 0 \pmod{p^2} \quad \mbox{and} \quad 2^p+(-1)^{\frac{p-3}{4}}2^{\frac{p+1}{2}} \equiv 0 \pmod{p^2}.
    \end{equation}
    
    To prove \eqref{Main Thm for p=3 mod 4}, suppose first that $pH_2(\alpha)\equiv 0 \pmod{p^2}$. From the first congruence in \eqref{H2(alpha)} we see that
    \begin{equation}\label{Con1}
    (-1)^{\frac{p-3}{4}}2^{\frac{p-1}{2}}\equiv -1 \pmod{p^2}.
    \end{equation} Squaring both sides of \eqref{Con1}  yields $2^{p-1}\equiv 1 \pmod{p^2}$. 
    
    Conversely, suppose that $2^{p-1}\equiv 1 \pmod{p^2}$. Then
    \begin{equation}\label{factorization}
    2^{p-1}-1\equiv (2^{\frac{p-1}{2}}-1)(2^{\frac{p-1}{2}}+1)\equiv 0 \pmod{p^2}.
    \end{equation} From \eqref{factorization}  we conclude that  
    \[2^{\frac{p-1}{2}}\equiv \pm 1 \pmod{p^2},\] since $p$ divides one and only one of the factors in the factorization \eqref{factorization}. 
        We claim that 
    \begin{equation}\label{claim}
      2^{\frac{p-1}{2}}\equiv (-1)^{\frac{p+1}{4}} \pmod{p^2}.   
    \end{equation}  
   Suppose first that $2^{\frac{p-1}{2}}\equiv 1 \pmod{p^2}$, so that $2^{\frac{p-1}{2}}\equiv 1 \pmod{p}$. By Euler's criterion and the quadratic character of 2, we have that 
      \[2^{\frac{p-1}{2}}\equiv \left(\frac{2}{p}\right)\equiv (-1)^{\frac{p^2-1}{8}} \pmod{p}.\] Since $p\equiv 3\pmod{4}$, it follows that $p\equiv 7 \pmod{8}$ and thus, 
      \[(-1)^{\frac{p^2-1}{8}}=(-1)^{\frac{p+1}{4}}=1.\] Similarly, if $2^{\frac{p-1}{2}}\equiv -1 \pmod{p^2}$, then $p\equiv 3 \pmod{8}$, so that  
      \[(-1)^{\frac{p^2-1}{8}}=(-1)^{\frac{p+1}{4}}=-1.\] Therefore, the claim \eqref{claim} is estabished. 
      Therefore, since $2^p\equiv 2\pmod{p^2}$ and $p\equiv 3 \pmod{4}$, we have from \eqref{pH2(alpha)2 p mod 4=3} and  \eqref{claim} that   
    \begin{align*}\label{pH2(alpha)2}
    pH_2(\alpha)&\equiv 2\left(1+(-1)^{\frac{p-3}{4}}2^{\frac{p-1}{2}}\right)(1+i)\\
    &\equiv 2\left(1+(-1)^{\frac{p-3}{4}}(-1)^{\frac{p+1}{4}}\right)(1+i)\\
    &\equiv  2\left(1+(-1)^{\frac{p-1}{2}}\right)(1+i)\\
    &\equiv 0 \pmod{p^2},
    \end{align*} 
    which finishes the proof of \eqref{Main Thm for p=3 mod 4}, and completes the proof of the theorem when $p\equiv 3 \pmod{4}$.
    
    Suppose next that $p\equiv 1 \pmod{4}$. Then $H_1(x)$ is reducible in $\F_p[x]$ since $-1$ is a square in $\F_p$. Let $\alpha=-1+i\in \F_p$ be a zero of $H_2(x)$, where $i$ and $\alpha$ are such that $1\le \alpha\le p-3$ and $2\le i\le p-2$ with $i^2\equiv -1 \pmod{p}$. Then, $\beta=p-1-i\in \F_p[x]$ is the other zero of $H_1(x)$, where $1\le \beta\le p-3$. The strategy here, as in the previous case, is to determine conditions under which $pH_2(\theta)\equiv 0 \pmod{p^2}$, where $\theta\in \{\alpha,\beta\}$. It is conceivable that exactly one of the two zeros $\theta$, and not the other, could satisfy the congruence $pH_2(\theta)\equiv 0 \pmod{p^2}$. However, it turns out that 
    \begin{equation}\label{Equivalent}
    pH_2(\alpha)\equiv 0 \pmod{p^2} \quad \mbox{if and only if} \quad pH_2(\beta)\equiv 0 \pmod{p^2},
    \end{equation} which shows that considering the single zero $\alpha$ will suffice. We postpone the proof of \eqref{Equivalent} to focus first on $\alpha$, and show that 
    \begin{equation}\label{Main Thm p mod 4=1}
    pH_2(\alpha)\equiv 0 \pmod{p^2} \quad \mbox{if and only if} \quad 2^{p-1}\equiv 1 \pmod{p^2}.
   \end{equation} Since $H_1(\alpha)\equiv 0 \pmod{p}$, we see that $\alpha^2\equiv -2(\alpha+1) \pmod{p}$, which implies % that
    \begin{equation}\label{alpha^{2p}}
    \alpha^{2p}\equiv -2^p(\alpha+1)^p\equiv -2^pi^p \pmod{p^2}.
     \end{equation}  Therefore,  from the definition of $H_2(x)$, \eqref{F} and \eqref{alpha^{2p}}, it follows that
        \begin{align}\label{pH2=F}
        \begin{split}
pH_2(\alpha)&=2\alpha^p+2-2^p(\alpha+1)^p\\
          &=2\alpha^p+2-2^pi^p\\
          &\equiv 2\alpha^p+2+\alpha^{2p}\pmod{p^2}\\
          &\equiv \FF_p(\alpha) \pmod{p^2}. 
       \end{split}    
        \end{align}

        To prove \eqref{Main Thm p mod 4=1}, suppose that $pH_2(\alpha)\equiv 0 \pmod{p^2}$. Then $\FF_p(\alpha)\equiv 0\pmod{p^2}$ from \eqref{pH2=F}, and hence 
        \begin{equation}\label{alpha^p}
        \alpha^{p}\equiv -\frac{\alpha^{2p}+2}{2} \pmod{p^2}.
        \end{equation} Squaring both sides of \eqref{alpha^p} yields 
        \begin{equation}\label{Squared}
        \alpha^{2p}\equiv \frac{(\alpha^{2p}+2)^2}{4} \pmod{p^2}.
        \end{equation} Thus, from \eqref{alpha^{2p}} and \eqref{Squared}, we have 
        \[-2^{p+2}i^p\equiv 2^{2p}i^{2p}-2^{p+2}i^p+4\pmod{p^2},\] or equivalently,
        \begin{equation}\label{Eq}
        2^{2p}i^{2p}+4\equiv 0 \pmod{p^2}.
        \end{equation}    
    Since $i^2\equiv -1 \pmod{p}$, we have    
    \begin{equation}\label{i^{2p}}
     i^{2p}\equiv (-1)^p\equiv -1 \pmod{p^2}.
    \end{equation} Consequently, from \eqref{Eq} and \eqref{i^{2p}}, we deduce that 
    \begin{equation}\label{Wie}
    (2^p-2)(2^p+2)\equiv 0 \pmod{p^2}.
     \end{equation}   Since $2^p\equiv 2 \pmod{p}$, we have that $2^p+2\not \equiv 0 \pmod{p}$ since $p\ge 3$. Thus, it follows from \eqref{Wie} that $2^p-2\equiv 0 \pmod{p^2}$ and, therefore, $2^{p-1}\equiv 1 \pmod{p^2}$. % $p$ is a Weiferich prime.  
     
     Conversely, if $2^{p-1}\equiv 1 \pmod{p^2}$, then $2^p-2\equiv 0\pmod{p^2}$, and the steps of the proof in the other direction can be reversed from \eqref{Wie} back to \eqref{Squared} to give us
     \begin{equation}\label{Reverse}
     \left(\alpha^{2p}+2-2\alpha^p\right)\left(\alpha^{2p}+2+2\alpha^p\right)\equiv 0\pmod{p^2}.
     \end{equation} Suppose that 
     \[\alpha^{2p}+2-2\alpha^p\equiv \left(\alpha^{2}+2-2\alpha\right)^p\equiv 0 \pmod{p}.\] Then, since $H_1(\alpha)=\alpha^2+2\alpha+2\equiv 0 \pmod{p}$, we arrive at the contradiction  
     \[H_1(\alpha)-\left(\alpha^{2}+2-2\alpha\right)=4\alpha\equiv 0\pmod{p}.\] Therefore, we conclude from \eqref{Reverse} and \eqref{pH2=F} that
     \[pH_2(\alpha)\equiv \FF_p(\alpha) \equiv \alpha^{2p}+2+2\alpha^p \equiv 0 \pmod{p^2},\]
     which completes the proof in this direction for the zero $\alpha$.
     
     To show that we do not need to consider the zero $\beta$, we now provide a proof of \eqref{Equivalent}. Since $\alpha\beta\equiv 2\pmod{p}$, it follows that 
     \begin{equation}\label{alpha beta}
     \alpha^p\beta^p\equiv 2^p\pmod{p^2}.
     \end{equation} Since $H_1(\beta)\equiv 0 \pmod{p}$, we see that $\beta^2\equiv -2(\beta+1) \pmod{p}$, which implies % that
    \begin{equation}\label{beta^{2p}}
    \beta^{2p}\equiv -2^p(\beta+1)^p\equiv -2^p(p-i)^p\equiv 2^pi^p \pmod{p^2}.
     \end{equation}  Therefore,  from the definition of $H_2(x)$, \eqref{F} and \eqref{beta^{2p}}, it follows that
        \begin{align}\label{pH2=F beta}
        \begin{split}
          pH_2(\beta)&=2\beta^p+2-2^p(\beta+1)^p\\
          &=2\beta^p+2-2^p(p-i)^p\\
          &=2\beta^p+2+2^pi^p \pmod{p^2}\\
          &\equiv 2\beta^p+2+\beta^{2p}\pmod{p^2}\\
          &\equiv \FF_p(\beta) \pmod{p^2}. 
       \end{split}    
        \end{align} To establish \eqref{Equivalent}, suppose that $pH_2(\alpha)\equiv 0 \pmod{p^2}$. Then $\FF_p(\alpha)\equiv 0 \pmod{p^2}$ from \eqref{pH2=F}, and therefore, $2^{p}\equiv 2\pmod{p^2}$ from the argument following \eqref{Wie}. Hence, from \eqref{alpha beta}, it follows that 
        \begin{align*}
          \FF_p(\beta)&=\beta^{2p}+2\beta^p+2\\ 
          &\equiv \left(\frac{2^p}{\alpha^p}\right)^2+2\left(\frac{2^p}{\alpha^p}\right)+2\pmod{p^2}\\
          &\equiv \frac{2\left(\alpha^{2p}+2\alpha^p+2\right)}{\alpha^{2p}}\pmod{p^2}\\
          &\equiv \frac{2\FF_p(\alpha)}{\alpha^{2p}}\pmod{p^2}\\
          &\equiv 0 \pmod{p^2}.
        \end{align*}  Thus, $pH_2(\beta)\equiv 0 \pmod{p^2}$ from \eqref{pH2=F beta}. The converse is similar and we omit the details. 
  \end{proof}

%\section*{Acknowledgments} %The author thanks the referee for the valuable suggestions.
%\section{Final Remarks}

% For alignments use AmS-LaTeX constructions not \eqnarray.

%% - theorems and proofs
%\begin{thm}[optional text]
% The optional material will be typeset as part of the theorem heading
%\end{thm}

%\begin{proof}[Optional proof heading]
% the proof
%\end{proof}
% An end-of-proof symbol (open box) will be typeset at the
% end of the proof.

%\ack % or \acks
% Put acknowledgements here
%The authors thank the anonymous referee for the suggestions that helped to improve the paper. %the helpful comments.

%\nocite{*}

% alteratively, bibliographies prepared with BibTeX can be included by
% means of the following commands
%\bibliographystyle{srtnumbered}
%\bibliography{mybib}

\end{document}